\newtheorem{theorem}{Theorem}[section]
\newtheorem{lemma}[theorem]{Lemma}
\theoremstyle{remark}
\newtheorem*{remark}{Remark}
\newtheorem*{example}{Example}
\title{Infinitesimally Equivariant Bundles on Complex Manifolds}
\author{Emile Bouaziz}
\begin{document} \maketitle

\begin{abstract} We show that any continuous $\mathbf{C}$-linear Lie algebra splitting of the symbol map from the Atiyah algebra of a vector bundle on a complex manifold is given by a differential operator of order at most the rank of the bundle plus one. Bundles equipped with such a splitting can be thought of as \emph{infinitesimally equivariant} bundles, and our theorem implies these are, in a certain sense, in a categorical formal neighbourhood of vector bundles with a flat connection.   \end{abstract}

\section{Introduction}

\subsection{Basic Notions and Infinitesimal Equivariance.} Let $X$ be a complex manifold, with sheaf of holomorphic functions $\mathcal{O}_{X}$ and sheaf of holomorphic vector fields $\Theta_{X}$. If $\mathcal{V}$ is a holomorphic bundle of finite rank on $X$, we have the \emph{Atiyah algebra}, $\operatorname{At}_{X}(\mathcal{V})$, the sheaf of first order differential operators from $\mathcal{V}$ to itself with symbol in $\Theta_{X}\subset\Theta_{X}\otimes\operatorname{End}(\mathcal{O}_{X})$. Local sections are pairs $(\eta,\widetilde{\eta})$ where $\eta$ is a vector field and $\widetilde{\eta}$ is an endomorphism of $\mathcal{V}$ satisfying the Leibniz rule $\widetilde{\eta}(fs)=f\widetilde{\eta}s+\eta(f)s$. $\operatorname{At}_{X}(\mathcal{V})$ is a sheaf of $\mathbf{C}$-Lie algebras and the \emph{symbol} map, $\sigma$, to $\Theta_{X}$,  is a Lie map. The reader is referred to \cite{BS} for an excellent discussion of Atiyah algebras.

 A choice of continuous (cf. the remark preceding Theorem 1.1.) $\mathbf{C}$-Lie algebra splitting of $\sigma$ can be thought of as an action of infinitesimal symmetries on $\mathcal{V}$. Informally, this expresses a certain naturality of $\mathcal{V}$. In particular, one expects such a structure on any suitably natural sheaf. Below we will denote by $L$ a given splitting of $\sigma$, and refer to $L$ as the \emph{Lie map}. Such a choice of $L$ will be referred to as an \emph{infinitesimally equivariant} structure on $\mathcal{V}$, or an \emph{infeq} structure for short. The category of such is denoted $\mathbf{InfEq}(X)$, and its elements are referred to as \emph{infeq bundles}. Given an infeq bundle $\mathcal{V}$ and a vector field $\eta$, the corresponding endomorphism of $\mathcal{V}$ will be denoted $L_{\eta}$, and referred to as the \emph{Lie derivative} by $\eta$.

\begin{example} \begin{itemize}\item An infeq structure on $\mathcal{V}$ for which $L$ is $\mathcal{O}$-linear is the data of a flat connection on $\mathcal{V}$, as can easily be checked. 
\item  The sheaves of forms $\Omega^{i}_{X}$ are naturally infeq bundles, with the usual Lie derivative action. Note that in this case the map $L$ is not $\mathcal{O}$-linear. In fact it is a first order differential operator - this follows from the Cartan formula expressing $L_{\eta}$ as the commutator of the $\mathcal{O}$-linear contraction, $\iota_{\eta}$ with the first order differential operator $d_{dR}$.
\item The sheaf, $D^{\leq n}_{X}$, of differential operators of order at most $n$ is naturally an infeq bundle on $X$. In this case $L$ has order $n$ as a differential operator.
\item There are plenty of examples of bundles admitting no infeq structure, indeed one expects a generic bundle not to admit one. The simplest example is a line bundle of non-zero degree on an elliptic curve, cf. subsection 3.3. below for a sketch of a proof. \end{itemize}\end{example}

\begin{remark} The sheaf $\mathcal{O}$ is naturally a sheaf of topological vector spaces, where the topology is induced by the topology of uniform convergence on compacta, so we can make sense of continuous morphisms between trivial bundles.  A morphism between bundles is called continuous if it is so with respect to a trivialising cover of $X$. This is independent of choices of trivialisation. Differential operators, and in particular $\mathcal{O}$-linear morphisms, are continuous. \end{remark}

The examples given above are all such that $L$ is a differential operator of some order. This is in fact forced, and is our main theorem. 
\begin{theorem} Let $\mathcal{V}$ be an infinitesimally equivariant bundle on $X$, with Lie map $L$. Then $L$ is a differential operator of order at most $\operatorname{rank}(\mathcal{V})+1.$  If $\mathcal{V}$ is a line bundle, then in fact $L$ has order at most $1$. \end{theorem}

\section{acknowledgements} We have benefited from numerous conversations with Yuly Billig, Colin Ingalls and Henrique Rocha during the writing of this note. An algebraic version of some of the results presented here is the subject of  joint work of the author with Rocha, cf. \cite{BR}.

\section{Proof of Main Theorem}

\subsection{Infinite Order Differential Operators} Our proof of theorem 1.1 relies on a beautiful result of Ishimura (\cite{Ish}). Ishimura proves that continuous endomorphisms of $\mathcal{O}_{X}$ are (uniquely) represented by differential operators \emph{of possibly infinite order}. The (formally defined) symbols of these differential operators are required to satisfy certain growth conditions. For example, it is a pleasant calculation with the Cauchy integral formula to see that if the $\lambda_{i}$ are constants, then $\sum_{n}\lambda_{n}\partial^{n}$ acts on functions on $\mathbf{C}$ in a local way iff $\sum_{n}\lambda_{n}n!z^{n}$ is entire. Nonetheless, these growth conditions will not concern us, as we need only the representation as an infinite order differential operator.
 
Let us fix now some notation, $\Delta^{d}$ will denote a complex polydisc of dimension $d$, with coordinates $z_{i}, \,i=1,...,d$. We write $z=(z_{1},...,z_{d})$. The derivations $\partial_{j}$ satisfying $\partial_{j}(z_{i})=\delta_{ij}$ give a trivialisation of $\Theta_{\Delta^{d}}$. As usual, if $I=(i_{1},i_{2},...,i_{d})\in\mathbf{Z}^{d}_{+}$ is a multi-index, we write $z^{I}:=\prod_{j}z_{j}^{i_{j}}$, $\partial^{I}:=\prod_{j}\partial_{j}^{i_{j}}$. Further we write $\operatorname{wt}(I):=\sum_{j}i_{j}$ and refer to it as the \emph{weight} of the multi-index. Finally, we write $I!:=\prod_{j}i_{j}!$. Infinite order differential operators on $\Delta^{d}$ are thus represented as infinite sums, $$\sum_{I}a_{I}(z)\partial^{I}, \, a_{I}(z)\in\mathcal{O}(\Delta^{d}).$$ 

\subsection{Proof of main theorem} We are now in a position to prove the main theorem of this note. We record first the following lemma, which is purely Lie theoretical, certainly well known, and at any rate quite simple. Note that the importance of the representation theory of Lie algebras of vector fields on discs to the study of infeq objects is clear from \cite{BFN} and \cite{BIN}, who work in an algebaic context. The relevance of the representation theory of $\mathfrak{g}_{d}$ should also be clear to the reader familiar with the Gelfand-Kazhdan \emph{formal geometry}, cf. \cite{GGW}.

\begin{lemma} Let $\mathfrak{g}_{d}$ be the topological Lie algebra of holomorphic vector fields on $\Delta^{d}$ which vanish at the origin. For each $N$, let $\mathfrak{g}_{d}^{N}$ be the quotient of $\mathfrak{g}_{d}$ by the ideal of vector fields on $\Delta^{d}$ vanishing to order $N+2$ at the origin. Then if $\mathfrak{g}_{d}$ acts continuously on a $\mathbf{C}$ vector space $V$ of dimension $r$, the action factors through the quotient $\mathfrak{g}_{d}^{r}$. Further, if $r=1$, the quotient actually factors through $\mathfrak{g}_{d}^{0}$. \end{lemma}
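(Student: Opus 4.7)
The plan is to leverage the Euler element $E = \sum_{i=1}^d z_i \partial_i$ in $\mathfrak{g}_d$, which scales the weight-$n$ piece $\mathfrak{g}_d(n) \subset \mathfrak{g}_d$ (spanned by $z^I \partial_j$ with $\operatorname{wt}(I) = n+1$) by $n$ under the adjoint action. First I would reduce to the case that the generalized $E$-eigenvalues on $V$ all lie in a single $\mathbf{Z}$-coset, since each coset-summand is $\mathfrak{g}_d$-stable ($\mathfrak{g}_d(n)$ shifts eigenvalues by $n \in \mathbf{Z}_{\geq 0}$). After a scalar shift we may then assume these eigenvalues are integers $\lambda_1 < \cdots < \lambda_s$ with $s \leq r$.

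Setting $V^{\geq i} := \bigoplus_{j \geq i} V(\lambda_j)$ gives a descending filtration by $\mathfrak{g}_d$-submodules on which $\mathcal{U}_1 := \bigoplus_{n \geq 1} \mathfrak{g}_d(n)$ shifts the index strictly up: $\mathcal{U}_1 \cdot V^{\geq i} \subseteq V^{\geq i+1}$. Iterating yields the key nilpotency estimate
\[ \mathcal{U}_1^r \cdot V \subseteq V^{\geq s+1} = 0, \]
i.e., any $r$-fold composition of elements of $\mathcal{U}_1$ annihilates $V$. For $r = 1$ this already gives $\mathcal{U}_1 \cdot V = 0$, so the action factors through $\mathfrak{g}_d(0) = \mathfrak{g}_d^0$, proving the second clause of the lemma.

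For general $r$, I would finish by showing that every $\xi \in \mathfrak{g}_d(n)$ with $n \geq r+1$ is expressible as an iterated Lie bracket of at least $r$ factors drawn from $\mathcal{U}_1$. Expanding such a bracket yields a sum of $r$-fold compositions in $\mathcal{U}_1$, each annihilating $V$ by the preceding estimate. For $d \geq 2$ the recursion $\mathfrak{g}_d(n) = [\mathfrak{g}_d(1), \mathfrak{g}_d(n-1)]$ delivers an iterated bracket with $n \geq r+1$ weight-one factors; for $d = 1$ one has $[\mathfrak{g}_1(1), \mathfrak{g}_1(1)] = 0$, so a single weight-two factor is unavoidable, and one instead obtains $n - 1 \geq r$ factors via $\mathfrak{g}_1(n) = [\mathfrak{g}_1(1), \mathfrak{g}_1(n-1)]$ for $n \geq 3$, terminating at $\mathfrak{g}_1(2)$.

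The main (and rather mild) obstacle is the bracket-generation claim in the previous paragraph. The $d = 1$ deficit, namely that $[\mathfrak{g}_1(1), \mathfrak{g}_1(1)] = 0$ forces using a weight-two factor, is precisely what dictates the bound $r+1$ in the lemma; for $d \geq 2$ the same method in fact yields the sharper conclusion $\mathcal{U}_r \cdot V = 0$.
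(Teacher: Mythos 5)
Your argument is correct and reaches the same combinatorial core as the paper, but by a genuinely different route in its second half. The paper also starts from the weight decomposition under the Euler field $\nu$, but it uses the spectrum of $\operatorname{ad}\rho(\nu)$ on $\mathfrak{gl}(V)$ only to conclude that the action factors through \emph{some} finite-dimensional quotient $\mathfrak{g}_{d}^{N}$; it then invokes Lie's theorem to place the image inside a Borel subalgebra of $\mathfrak{gl}_{r}$ and kills high-weight vectors by showing they lie in the $r$-th \emph{derived} subalgebra (the derived series of the Borel terminating by step $r$). You instead extract finer information from the spectrum of $\rho(\nu)$ on $V$ itself: the filtration $V^{\geq i}$ by the at most $r$ generalized eigenvalues within a coset gives the operator statement $\mathcal{U}_{1}^{r}\cdot V=0$ directly, with no appeal to Lie's theorem, and you then need only that high-weight vectors are iterated brackets of at least $r$ positive-weight factors --- a lower-central-series statement rather than a derived-series one. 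This buys a more elementary and self-contained argument, an immediate $r=1$ case, and the (correct) bonus that for $d\geq 2$ the action already factors through $\mathfrak{g}_{d}^{r-1}$, consistent with the paper's remark that its sharpness examples all live in $d=1$. The step you defer --- $\mathfrak{g}_{d}(n)=[\mathfrak{g}_{d}(1),\mathfrak{g}_{d}(n-1)]$ for $d\geq 2$, $n\geq 2$, with the $d=1$ correction starting the recursion at $\mathfrak{g}_{1}(2)$ --- is true and is a finite check in each weight (the only delicate monomials are those like $z_{i}^{3}\partial_{i}$, handled for instance by $z_{i}^{3}\partial_{i}=[z_{i}^{2}\partial_{j},z_{i}z_{j}\partial_{i}]+2[z_{i}^{2}\partial_{i},z_{i}z_{j}\partial_{j}]$); it is exactly parallel to the induction with $[z_{i}^{2}\partial_{i},-]$ that the paper itself leaves to the reader. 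Two small points should be made explicit: the ``scalar shift'' must be justified as twisting by the character $\operatorname{tr}:\mathfrak{g}_{d}(0)\cong\mathfrak{gl}_{d}\rightarrow\mathbf{C}$ extended by zero to the positive weights (or simply omitted, since the eigenvalues within a single $\mathbf{Z}$-coset are already totally ordered), and at the end one should invoke continuity of the action to pass from the vanishing of $\rho$ on each weight vector of weight at least $r+1$ to its vanishing on the closed ideal these topologically span.
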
 

\begin{proof} There is a natural circle action on $\Delta^{d}$, and $\mathfrak{g}_{d}$ is topologically spanned by weight vectors for this action. Further, all weights occuring are non-negative integers and $\mathfrak{g}_{d}^{N}$ is the quotient by the ideal spanned (topologically) by vectors of weight at least $N+1$. Finally, if $\nu:=\sum_{i}z_{i}\partial_{i}$ is the Euler vector field, and $\eta$ is of weight $w$, then $[\nu,\eta]=w\eta$. 

We first show that the action of $\mathfrak{g}_{d}$ factors through the quotient $\mathfrak{g}_{d}^{N}$ for large enough $N$. Call the representation $\rho$. By continuity it suffices to show that there is some $N$ such that any vector of weight at least $N+1$ acts as $0$. If the image of the Euler vector field, $\rho(\nu)$ vanishes, then this holds trivially, as any non-zero weight vector is in the ideal generated by $\nu$. We may thus assume that $\nu$ maps to a non-zero element of $\mathfrak{gl}(V)$. Such an element can have only finitely many distinct eigenvectors when acting on $\mathfrak{gl}(V)$ via the adjoint representation. It follows immediately that there exists an $N$ as claimed, as the image of any weight vector of weight $w$ is an eigenvector for $\rho(\nu)$ acting on $\mathfrak{gl}(V)^{\operatorname{ad}}$. 

Now we show that we can take $N=r$. The Lie algebra $\mathfrak{g}^{N}_{d}$ is a finite dimensional solvable Lie algebra, and so we know that the image of $\rho$ is contained in a Borel subalgebra, by Lie's theorem. We must now show that any vector of weight at least $N+1$ is in the $N$-th derived subalgebra of $\mathfrak{g}_{d}^{N}$. This can be proven by an easy induction, using the operators $[z_{i}^{2}\partial_{i},-]$.

Finally, when $r=1$, the action must factor through the abelianization of $\mathfrak{g}_{d}$, which is easily seen to be a quotient of $\mathfrak{g}_{d}^{0}$, as non-zero weight vectors lie in the image of $[\nu,-]$.
\end{proof}

\begin{remark} Note that the above implies there is nothing \emph{holomorphic} about the category of continuous finite dimensional representations of $\mathfrak{g}_{d}$. As the representations factor through a quotient defined by tangency conditions at $0$, the vector fields may as well be formal algebraic such. \end{remark}

\begin{theorem} Let $\mathcal{V}$ be an infinitesimally equivariant bundle on $X$, with Lie map $L$. Then $L$ is a differential operator of order at most $\operatorname{rank}(\mathcal{V})+1.$  If $\mathcal{V}$ is a line bundle, then in fact $L$ has order at most $1$. \end{theorem}

\begin{proof} It suffices to prove the result when $X$ is a polydisc, $\Delta^{d}$, and $\mathcal{V}$ is the trivial bundle of rank $r$, $\mathcal{O}^{\oplus r}$. The Atiyah algebra, $\operatorname{At}_{\Delta^{d}}(\mathcal{O}^{\oplus r})$, is isomorphic to the semi-direct product of $\Theta:=\Theta_{\Delta^{d}}$ with $\mathfrak{gl}_{r}(\mathcal{O})$, where $\Theta$ acts on $\mathfrak{gl}_{r}(\mathcal{O})$ in the natural manner, denoted $*$ below. With these simplifications we see that the Lie map, $L$, corresponds to a continuous map of sheaves, $\widetilde{L}:\Theta\rightarrow\mathfrak{gl}_{r}(\mathcal{O})$, satisfying the following \emph{(non-abelian) cocycle} identity for all pairs of vector fields;

$$\widetilde{L}\big([\eta_{0},\eta_{1}]\big)=\eta_{0}*\widetilde{L}(\eta_{1})-\eta_{1}*\widetilde{L}(\eta_{0})+\big[\widetilde{L}(\eta_{0}),\widetilde{L}(\eta_{1})\big].$$

Now, by Ishimura's theorem, we know that we can write $\widetilde{L}$ as an infinite order differential operator from $\Theta$ to $\mathfrak{gl}_{r}(\mathcal{O})$. We see then that there are matrices $A^{i}_{I}\in\mathfrak{gl}_{r}(\mathcal{O})$, depending on $i\in\{1,2,...,d\}$ and $I\in\mathbf{Z}_{+}^{d}$, so that $\widetilde{L}$ is given by the matrix differential operator $$\sum_{i,I}A^{i}_{I}dz_{i}\partial^{I}.$$

By the cocycle identity applied to $\eta_{0}=\partial_{i}$ and $\eta_{1}=\partial_{j}$, we deduce immediately that $$\partial_{i}A^{j}_{\bf{0}}-\partial_{j}A^{i}_{\mathbf{0}}=\big[A^{i}_{\bf{0}},A^{j}_{\bf{0}}\big].$$ Equivalently, the matrix valued one form $A_{\bf{0}}:=\sum_{i}A^{i}_{\bf{0}}dz_{i}$ defines a flat connection on $\mathcal{O}^{\oplus r}$, denoted $\nabla_{A_{\bf{0}}}$. We now let $\eta_{1}$ be arbitrary, and fix $\eta_{0}=\partial_{i}$. We obtain the relation in the space of continuous homomorphisms of sheaves from $\Theta$ to $\mathfrak{gl}_{r}(\mathcal{O})$; $$\big[\widetilde{L},\partial_{i}\big]=\widetilde{L}(\partial_{i})dz_{i}+\operatorname{adj}_{\widetilde{L}(\partial_{i})}(\widetilde{L}),$$ where $\operatorname{adj}$ refers to the natural adjoint action of $\mathfrak{gl}_{r}(\mathcal{O})$ on the space of such homomorphisms. Substituting $\sum_{i,I}A^{i}_{I}dz_{i}\partial^{I}$ for $\widetilde{L}$ and equating coefficients of $\partial^{J}$ with $\operatorname{wt}(J)> 0$, we deduce the following differential equations; $$\partial_{i}A^{j}_{J}=\big[A^{i}_{\bf{0}},A^{j}_{J}\big].$$ Equivalently, we see that $A^{j}_{J}$, for $\operatorname{wt}(J)>0$ are flat sections of the adjoint connection, $\nabla^{\operatorname{adj}}_{A_{\bf{0}}}$. In particular, these matrices vanish as soon as their value at $0\in\Delta^{d}$ does.

Now, we recall the Lie algebra $\mathfrak{g}_{d}$, which naturally lies inside $\Gamma(\Delta^{d},\Theta)$. We can restrict $\widetilde{L}$ to $\mathfrak{g}_{d}$ and evaluate at $0\in\Delta^{d}$ to produce a linear map, $\rho$, from $\mathfrak{g}_{d}$ to $\mathfrak{gl}_{r}(\mathbf{C})$. The terms $\eta_{0}*\widetilde{L}(\eta_{1})$ and $\eta_{1}*\widetilde{L}(\eta_{0})$ both vanish when evaluated at $0$ (by the definition of $\mathfrak{g}_{d}$), hence the resulting map is a morphism of Lie algebras. Applied to a vector field $z^{J}\partial_{j}$, $\rho$ simply produces $J!A^{j}_{J}(0)$, which we deduce vanishes for $\operatorname{wt}(J)\geq r+2$ by lemma 2.1. As remarked above, this implies that the matrices $A^{j}_{J}$ vanish for $\operatorname{wt}(J)\geq r+2$, as they are flat sections of the adjoint connection $\nabla^{\operatorname{adj}}_{A_{\bf{0}}}$, whence $L$ is a differential operator of order at most $r+1$. Further, if $r=1$ it is clear that we obtain a differential operator of order at most $1$, and the theorem is proven.\end{proof}

\begin{remark} We regret that we do not know if the above bound $\operatorname{rank}(\mathcal{V})+1$ is optimal for every choice of $d$ and $r$. Of course, the proof above makes it clear that this is a purely Lie theoretic question. \begin{itemize}\item For $r=1$ it the bound of $1$ is always obtained - for example by the determinant bundle $\omega_{X}$. \item For $d=1$ we can always find a rank $r$ infeq bundle so that $L$ has order $r$, indeed we can take the sheaf of $r$-jets of sections of $\mathcal{O}$. \item For $d=1$ and $r=2$ we can achieve the bound of $3$ with the infeq bundle on $\Delta^{1}$ whose Lie map is $h\partial+e\partial^{3}$, with $h,e\in\mathfrak{sl}_{2}(\mathbf{C})\subset\mathfrak{gl}_{2}(\mathcal{O})$ the evident elements.  \end{itemize}\end{remark}

\subsection{Local Structure, Obstructions and Further Examples.}

The proof of theorem 2.2 essentially proves that there is an equivalence of categories between $\mathbf{InfEq}(\Delta^{d})$ and $\mathbf{Rep}^{\operatorname{fin}}(\mathfrak{g}_{d})$. Indeed the connection $\nabla_{A_{\bf{0}}}$ is trivialisable, and as the matrices $A^{j}_{J}$ with $\operatorname{wt}(J)>0$ are flat sections of $\nabla_{A_{\bf{0}}}$, a suitable change of frame makes all the $A^{j}_{J}$ constant. It is then an easy exercise to see that infeq structures on a trivial bundle for which the Lie map is a constant coefficient differential operator correspond exactly to finite dimensional representations of $\mathfrak{g}_{d}$. Reiterating the above, we have the following equivalence. \begin{theorem} There is an equivalence of categories $$\mathbf{InfEq}(X)\cong\mathbf{Rep}^{\operatorname{fin}}(\mathfrak{g}_{d}).$$ \end{theorem}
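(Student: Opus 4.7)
The plan is to reduce any infeq structure to a gauge-normal form with constant-coefficient Lie map, and then match such normal forms with finite-dimensional representations of $\mathfrak{g}_d$. I interpret $X = \Delta^d$, as indicated in the discussion preceding the statement (the claim is local in nature; $\mathfrak{g}_d$ depends on $d = \dim X$).

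First, the gauge reduction. Trivializing $\mathcal{V}$ as $\mathcal{O}^{\oplus r}$, I write $\widetilde{L} = \sum_{i,I} A^i_I dz_i \partial^I$ as in the proof of Theorem 2.2. The matrix-valued one-form $A_{\mathbf{0}} = \sum_i A^i_{\mathbf{0}} dz_i$ is a flat connection on the trivial bundle, hence gauge-equivalent to the zero connection on the polydisc. After the corresponding change of frame, $A^i_{\mathbf{0}} = 0$, and the differential equations $\partial_i A^j_J = [A^i_{\mathbf{0}}, A^j_J]$ obtained in the proof of Theorem 2.2 force every $A^j_J$ with $\operatorname{wt}(J) \geq 1$ to be constant. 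By that theorem, these then vanish for $\operatorname{wt}(J) \geq r+2$, so $\widetilde{L}$ has become a constant-coefficient matrix differential operator of bounded order.

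Now I match these normal forms with $\mathbf{Rep}^{\operatorname{fin}}(\mathfrak{g}_d)$. In one direction, evaluating $L|_{\mathfrak{g}_d}$ at $0$ produces a Lie homomorphism $\rho(z^J \partial_j) = J!\, A^j_J$; the $*$-terms in the cocycle identity drop out because vectors in $\mathfrak{g}_d$ vanish at the origin, and the resulting $\rho$ is continuous, hence finite-dimensional by Lemma 2.1. Conversely, given $\rho : \mathfrak{g}_d \to \mathfrak{gl}_r(\mathbf{C})$ (automatically factoring through $\mathfrak{g}_d^r$ by Lemma 2.1), I set $A^j_J := \rho(z^J \partial_j)/J!$ for $\operatorname{wt}(J) \geq 1$ and $A^i_{\mathbf{0}} := 0$, and take the corresponding constant-coefficient operator for $\widetilde{L}$. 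The main computation is the cocycle identity for this $\widetilde{L}$: both sides are polynomial differential expressions in the Taylor coefficients of $\eta_0, \eta_1$, so by continuity it suffices to check on polynomial vector fields, where the identity reduces to a purely Lie-algebraic statement. The $*$-terms precisely encode the failure of the ``drop the constant part'' map $\eta \mapsto \eta - \eta(0)$ to be a Lie homomorphism from polynomial vector fields to $\mathfrak{g}_d$, and this failure is balanced against $[\widetilde{L}(\eta_0),\widetilde{L}(\eta_1)]$ by invoking the Lie-homomorphism property of $\rho$.

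For morphisms, passing $\phi : (\mathcal{V}, L) \to (\mathcal{V}', L')$ to gauge-normal form on both sides represents $\phi$ by a matrix-valued function $\Phi(z)$. Commutation with $L_{\partial_i} = \partial_i$ forces $\partial_i \Phi = 0$, so $\Phi$ is a constant matrix; commutation with $L_{z^J \partial_j}$ then forces $\Phi$ to intertwine $\rho$ with $\rho'$, and conversely any intertwiner yields a morphism. The main obstacle is the cocycle verification in the previous paragraph, which admits a conceptual reformulation in the Gelfand--Kazhdan formal-geometry picture referenced in the introduction (wherein constant-coefficient infeq structures on $\Delta^d$ are the associated bundles for $\mathfrak{g}_d$-representations); the gauge reduction and the morphism computation are both routine once the normal form is in hand.
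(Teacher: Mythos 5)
Your proposal is correct and follows the same route as the paper: trivialise the flat connection $\nabla_{A_{\mathbf{0}}}$ to put the Lie map in constant-coefficient normal form, then identify constant-coefficient infeq structures with finite-dimensional representations of $\mathfrak{g}_{d}$ (reading $X=\Delta^{d}$, as the paper intends). The paper dismisses the second step as an ``easy exercise'' and cites the argument of its Theorem 2.2; you usefully supply the missing details, in particular the cocycle verification for the inverse construction and the identification of morphisms with intertwiners.
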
\begin{proof} Follows from the argument of theorem 2.2.\end{proof} We stress an important consequence of the above. If $\mathcal{V}$ is an infeq bundle on a general (connected) $X$, then there is a well defined (up to isomorphism) $\rho(\mathcal{V})\in\mathbf{Rep}^{\operatorname{fin}}(\mathfrak{g}_{d})$ which describes the local structure of $\mathcal{V}$, i.e.the isomorphism type, with respect to the isomorphism of theorem 3.3, of the restriction of $\mathcal{V}$ to a disc in $X$. In particular for $\rho$ a fixed representation of $\mathfrak{g}_{d}$ on a finite dimensional space, we obtain a full subcategory, $\mathbf{InfEq}^{\rho}(X)$, of $\mathbf{InfEq}(X)$. We think of the complexity of the representation $\rho$ as measuring how far $\mathcal{V}$ is from being a vector bundle with flat connection, indeed $\rho(\mathcal{V})$ is zero iff $\mathcal{V}$ is a vector bundle with flat connection.

\begin{example}\begin{itemize}\item For fixed dimension $r$ and $\rho=0$, we obtain the category of vector bundles of rank $r$ with a flat connection. \item $\Omega_{X}^{1}$ is naturally an element of $\mathbf{InfEq}(X)$. It is easy to see that $\rho_{\Omega^{1}_{X}}$ is the pull-back to $\mathfrak{g}_{d}$ of the standard representation of $\mathfrak{gl}_{d}\cong\mathfrak{g}_{d}^{1}$. \item We know fix $d=r=1$, so let $X$ be a smooth projective curve and let $\mathcal{L}$ be a line bundle on $X$. Let $\rho$ be a fixed $1$-dimensional representation of $\mathfrak{g}_{1}$. $\rho$ is equivalent to the data of a complex number, which we also denote $\rho$. On each complex disc in $X$, there is a well defined infeq sheaf corresponding to $\rho$. The obstruction to a global such with underlying line bundle $\mathcal{L}$ is $c_{1}(\mathcal{L})-\rho c_{1}(X)$. This is a relatively straightforward generalisation of the case $\rho=0$, which is a well known theorem. In this case, an infeq structure is a flat connection. Covering $X$ by discs $U_{i}$, a flat connection on $\mathcal{L}$ is equivalent to one forms $\lambda_{i}$ on $U_{i}$ so that the we have \emph{gauge conditions}, $$\lambda_{i}-\lambda_{j}=\operatorname{dlog}(\phi_{ij}),$$ where $\phi_{ij}$ are the frame transformations of $\mathcal{L}$ with respect to our cover. The $\lambda_{i}$ are thus a one-cochain with boundary the cocycle $\operatorname{dlog}(\phi_{ij})$, which is well known to represent $c_{1}(\mathcal{L})$. The case of general $\rho$ follows by similar cocycle manipulations, as we know that the Lie map $L$ is a differential operator of order at most $1$. In particular we note that line bundles of non-zero degree on an elliptic curve have no infeq structure, and any other line bundle on a smooth projective curve admits at least one such.\end{itemize}\end{example}

\end{document}